\newcommand{\Oh}{\mathcal{O}}
\renewcommand{\Pr}[1]{\ensuremath{\operatorname{\mathbf{Pr}}\left[#1\right]}}
\newcommand{\Ex}[1]{\ensuremath{\operatorname{\mathbf{E}}\left[#1\right]}}
\newcommand{\Var}[1]{\ensuremath{\operatorname{\mathbf{Var}}\left[#1\right]}}
\newcommand{\Vars}[2]{\ensuremath{\operatorname{\mathbf{Var}_{#1}}\left[#2\right]}}
\newcommand{\Exs}[2]{\ensuremath{\operatorname{\mathbf{E}_{#1}}\left[#2\right]}}
\newtheorem{thm}{Theorem}  %[chapter]
\newtheorem*{que}{Question}
\newtheorem{lem}[thm]{Lemma}
\newtheorem{cor}[thm]{Corollary}
\newtheorem{pro}[thm]{Proposition}
\title{Cutoff Phenomenon for Random Walks on Kneser Graphs}
\author{Ali Pourmiri \footnote{Max Planck Institute for Informatics, Saarbr\"ucken, Germany, email: {\tt pourmiri@mpi-inf.mpg.de}} \and Thomas Sauerwald \footnote{University of Cambridge, United Kingdom, email: {\tt thomas.sauerwald@cl.cam.ac.uk}}}
\begin{document}

\maketitle

\begin{abstract}
The cutoff phenomenon for an ergodic Markov chain 
describes a sharp transition in the convergence to its stationary distribution, over a negligible period of time, known as  cutoff window. 
 We study the cutoff phenomenon for simple random walks on Kneser graphs, which is a family of ergodic Markov chains. Given two integers $n$ and $k$, the Kneser graph $K(2n+k,n)$ is defined as the graph with vertex set being all subsets of $\{1,\ldots,2n+k\}$ of size $n$ and two vertices $A$ and $B$ being connected by an edge if $A\cap B =\emptyset$.  We show that  for any $k=O(n)$, the random walk on $K(2n+k,n)$ exhibits a cutoff at $\frac{1}{2}\log_{1+k/n}{(2n+k)}$ with a window of size $O(\frac{n}{k})$.
%In this paper we study the cutoff phenomenon for simple random walks on Kneser graphs. Given two integers $n$ and $k$, the Kneser graph $K(2n+k,n)$ is defined as the graph with vertex set being all subsets of $\{1,\ldots,2n+k\}$ of size $n$ and two vertices $A$ and $B$ being connected by an edge if $A\cap B =\emptyset$.  We show that  for any $k=O(n)$, then the random walk on $K(2n+k,n)$ exhibits a cutoff at $\frac{1}{2}\log_{1+k/n}{(2n+k)}$ with a window of size $O(\frac{n}{k})$.     
%\color{red}{()Further, if $k =
%\omega(n)$, then for every $\epsilon>0$ the total variation distance at  $\frac{1}{2}\log_{1+k/n}(2n+k)-(1+\epsilon)$    and  $\frac{1}{2}\log_{1+k/n}(2n+k)+\epsilon$ is one and zero respectively. In particular, if $k \in [\omega(n), n^{1+o(1)}]$, there is a cutoff  at $\frac{1}{2}\log_{1+k/n}(2n+k)$ with a window of size $O(1)$.}
\end{abstract}

\medskip
\textbf{Keywords:} Markov chain, random walk, cutoff phenomenon, Kneser graph
\medskip

\newpage

\section{Introduction}
A simple random walk on a finite, non-bipartite graph is a discrete-time ergodic Markov chain, where in each time step the walk, located at some vertex, chooses one of its neighbor uniformly at random and moves to that neighbor. 
The cutoff phenomenon for a sequence of  chains describes a sharp transition in the convergence of the chain distribution to its stationary distribution, over a negligible period of time, known as  cutoff window. For applications such as MCMC a cutoff is desirable, as running the chain any longer than the mixing time becomes essentially redundant. From a theoretical perspective,  establishing a cutoff is often surprisingly challenging, even for simple chains, as it requires very tight bounds on the distribution near the mixing time. 

Let $P$ be a transition matrix of an ergodic (i.e., aperiodic and irreducible), discrete-time Markov chain $(X_0, X_1,\ldots)$ on a finite
state space $\Omega$ with stationary distribution $\pi$.  Let $P^t(x, .)$ be the probability distribution of the chain at time $t\in \mathbb{N}$ with starting state $x\in \Omega$. 
The total variation distance between two probability distributions $\mu$ and $\nu$ on a probability space $\Omega$ is defined by
\begin{align*}
 \| \mu-\nu \|_{TV}=\max_{A\subset\Omega}|\mu(A)-\nu(A)|\in [0, 1].
\end{align*} Therefore, we can define the worst-case total variation distance to stationarity at time $t$ as
\[
d(t)=\max_{x\in \Omega}\| P^t(x, .)-\pi\|_{TV}.
\] 
For convenience, we define $d(t)$ for non-integer $t$ as $d(t):=d(\lfloor t\rfloor)$.
(If the reference is clear from the context, we will also just say total variation distance at time $t$). The mixing time is defined by
\[
 t_{mix}(\epsilon)=\min\{t\in \mathbb{N}: d(t)<\epsilon\}.
\]
Suppose now that we have a sequence of ergodic finite Markov chains indexed by $n=1,2,\ldots$.  Let $d_n(t)$   be the total variation distance of the $n$-th chain at time $t$ and $t^{(n)}_{mix}(\epsilon)$ be its mixing time.
 Formally, 
we say that the sequence of chains exhibits a cutoff (in total variation distance),  as defined in \cite[Section 18.1]{lev},  if for  any fixed $0< \epsilon <1$,
%{\color{blue} the reviewer complains the two following  formulas are not clear! I do not see any ambiguity } \textbf{[T: Maybe the role of $n$ is not entirely clear, we just say $n$-th chain and we do not mention that there is a family of Markov chains whose state space tends to infinity. Also, could $\epsilon$ be a function of $n$, or is it restricted to be a constant?]}{\color{red} In the first line we say  that this is a sequence of ergodic chains indexed by $n$. In the book they do not specify the $\epsilon$. It  should be a constant otherwise denominator is a constant  and nominator could grow arbitrary large. Moreover we study the behavior of the chain around its mixing time so it makes sense that $\epsilon$ is  a constant.  }
\begin{align*}
\lim_{n\rightarrow\infty} \frac{t^{(n)}_{mix}(\epsilon)}{t^{(n)}_{mix}(1-\epsilon)}=1, 
\end{align*}
or equivalently, a sequence of Markov chains has a cutoff at time $t_n$ with a window of size $w_n=o(t_n(1/4))$ if  
\begin{align}
\lim_{\lambda\rightarrow\infty}\liminf_{n\rightarrow\infty}d_n(t_n-\lambda w_n) &=1,\notag\\
\lim_{\lambda\rightarrow\infty}\limsup_{n\rightarrow\infty}d_n(t_n+\lambda w_n)&=0.\label{eq:two}
\end{align}
Although it is widely believed that many natural families of Markov chains exhibit a cutoff,  there are  relatively  few examples where cutoff has been  shown. It turns out that this is quite challenging  to prove or disprove the existence of a cutoff even for simple family of chains.
The first results exhibiting a cutoff appeared in the studies of card-shuffling processes by Aldous and Diaconis~\cite{Aldous}, and Diaconis and Shahshahani~\cite{Diaconis}. Later, the cutoff phenomenon was also shown for random walks on hypercubes \cite{Diaconis1}, for random walks on distance regular graphs including  Johnson and  Hamming graphs  \cite{Belsley,Diaconis2}, and for randomized riffle shuffles \cite{Chen}.  For a more general view of  Markov chains with and without cutoff we refer the reader to \cite{Diaconis3} or \cite[Chapter 18]{lev}.
A necessary condition, known as product condition,  for a family  of chains to exhibit cutoff  is that $t_{mix}^n(1/4)\cdot {\tt gap}_n$ tends to infinity as  $n$ goes to infinity, where ${\tt gap}_n$ is the spectral gap of the transition matrix of  $n$-th  chain (see \cite[Proposition 18.3]{lev}). 
 However there are some chains where the product condition holds and they do not show any cutoff (e.g see \cite[Section 18]{lev}), Peres \cite{Pe04} conjectured that many  natural family of  chains satisfying the product condition exhibit  cutoffs. For instance, he conjectured that random walks on any family of $n$-vertex (transitive) expander  graphs with ${\tt gap}_n=\Theta(1)$ and mixing time $\Oh(\log n)$ exhibit cutoffs. Chen and Saloff-Coste \cite{CS08} verified the conjecture for other distances like the $\ell^p$-norm for $p>1$.  Recently, Lubetzky and Sly \cite{lub} exhibited cutoff phenomena for random walks on random regular  graphs. They also showed that there exist families of explicit expanders with and without cutoff \cite{lub1}.
 Diaconis \cite{Diaconis3} pointed out that if  the second largest eigenvalues of  the transition matrix of a chain   has high multiplicity, then this chain is more likely to show a cutoff.   

In this work, we focus on simple random walks on Kneser graphs. The Kneser graph is defined as follows. For any two positive integers $n$ and $k$, the Kneser graph $K(2n+k, n)$ is the graph with all $n$-element subsets of $[2n+k]=\{1,2,\ldots,2n+k\}$ as vertices and two vertices adjacent if and only if
their corresponding $n$-element subsets are disjoint. We emphasize that throughout this paper, $k$ and $n$ are arbitrary integers, in particular, $k$ can be a function of $n$.  In the case that $k=\omega(n)$, the number of vertices which is ${2n+k}\choose{n}$ and  degree of each vertex, ${n+k}\choose{n}$, have the same magnitude so  the simple random walk on $K(2n+k,n)$ is  mixed  in just one step. For the special case $k=1$, we obtain the so-called odd graph $K(2n+1,n)$ with large odd cycles of size $2n+1$, which is an induced subgraph of $K(2n+k,n)$.  This proves that  $K(2n+k,n)$ is not bipartite for every $k\geq 1$. The permutation group on $[2n+k]$ is a subgroup of the automorphism group of $K(2n+k,n)$, and thus the Kneser graph is always transitive.
Combining these two observations, we conclude that the simple random walk on  $K(2n+k,n)$ is an ergodic and transitive Markov chain.
 Kneser graphs have been studied frequently in (algebraic) graph theory, in particular due to their connections to chromatic numbers and graph homomorphisms (see \cite{god} for more details and references).

Godsil \cite{god1} shows that for most values of $n$ and $k$, the graph $K(2n+k,n)$ is not a Cayley graph.
%
%e following result: \textbf{Thomas: What is the point behind mentioning this result in such depth? Can't we just give this result in one sentence. Also we should add a sentence saying that most cutoffs are shown for Cayley graphs and not only vertex-transitive graphs -- therefore, the graph class we study might be quite interesting, corresponding to what you said in the response letter.}{\color{blue}{Ali: I agree, we can remove the theorem}}
%\begin{theo}[{\cite[Theorem]{god1}}] Except for the following cases, $K(2n+k, n)$ is not a Cayley graph:
% \begin{itemize}
%  \item  $K(m, 2)$, where $m$ is a prime-power and $m\equiv 3$(mod $4$).
%   \item $K(8, 3)$ or $K(32,3)$. 
%  \end{itemize}
%\end{theo}
It is also well-known that   the transition matrix of the simple random walk on Kneser graph $K(2n+k, n)$ has spectral gap $\frac{k}{n+k}$ and its second largest eigenvalue has multiplicity ${2n+k}$ (cf.~Corollary \ref{cor:god}). So by varying $k=\Oh(n)$, we obtain various family of  chains with different spectral gaps.  For instance by setting  $k=\Theta(n)$ we obtain a family of transitive expander graphs. 
In order to show a cutoff for a simple random walk on Kneser graphs it is necessary to have a sufficiently tight estimate of its mixing time.   
Let $P$ be the transition matrix of the  simple random walk on Kneser graph $K(2n+k,n)$ with spectrum
$\lambda_i$, $0\leq i\leq {2n+k \choose n}-1 $ and $\lambda_0=1$. Then it is shown that
\cite[Lemma~12.16]{lev} 
\begin{align}
d(t) &= \max_{x\in \Omega}\| P^t(x, .)-\pi\|_{TV}\leq \frac{1}{2}\sqrt{\sum_{i=1}^{|\Omega|-1}\lambda_i^{2t}}, \label{eq:spec}
\end{align}
 where $\Omega$ is the vertex set of the graph.
 It may be surprising that the upper bound obtained by the spectral properties of transition matrix is sufficiently tight and matches the lower bound, which enables us to show the existence of a  cutoff. 
% As it turns out, the cutoff window has the magnitude of the {\it relaxation time} which is the inverse of the spectral gap \textbf{[T: Can we drop this sentence about the cutoff window?]}{\color{blue}Ali: Yes}.  
 Besides Kneser graphs, the bound in (\ref{eq:spec}) has been successfully applied in  computing of the mixing time 
% \textbf{[T: Do you mean mixing time or cutoff?]}{\color{blue}Ali:  mixing time} 
of random walks on Cayley graphs (see \cite{Diaconis4,Dou}). 
 This may suggest the following question:
 \begin{que}
  For which families of transitive ergodic chains is the upper bound  in (\ref{eq:spec}) tight up to low order terms?
 \end{que} 
%\textbf{[T: I reformuled the question. BTW, has anybody else posed this question before? Are you sure this conjecture makes sense and that there aren't simple counterexamples? Perhaps we could make the question a little bit weaker by asking for which families of graphs the bound in (2) is tight.]}{\color{red} I am not quite sure anybody else posed the question. I agree it is better to make it  weaker as you propose.}

%To prove our result we use the spectrum of transition matrix of simple random walk to give an upper bound for total variation distance. Then, we are inspired by a  method that autours in \cite{lev}, section 7.3.1 applies to bound below mixing time of lazy random walk on hypercubes.

\section{Result}

In the following we state the main result of the paper.
\begin{thm}\label{thm:main}
The simple random walk on $K(2n+k,n)$ exhibits a cutoff at $\frac{1}{2}\log_{1+k/n} (2n+k) $ with a cutoff window of size $O(\frac{n}{k})$ for  $k=O(n)$.
\end{thm}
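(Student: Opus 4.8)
The plan is to establish the two limits in (\ref{eq:two}) separately, at $t_n=\frac{1}{2}\log_{1+k/n}(2n+k)$ and $w_n=\Theta(n/k)$, by exploiting the explicit spectrum recorded in Corollary \ref{cor:god}. The decisive structural facts are that the largest nontrivial eigenvalue in absolute value is $|\lambda_1|=\frac{n}{n+k}=(1+k/n)^{-1}$ with multiplicity $m_1\le 2n+k$, and that for the other eigenvalue classes $|\lambda_i|=\prod_{j=0}^{i-1}\frac{n-j}{n+k-j}\le|\lambda_1|^i$ with multiplicity $m_i\le\binom{2n+k}{i}$. The defining feature of $t_n$ is the exact balance $(2n+k)\,|\lambda_1|^{2t_n}=1$, which is the threshold at which the $\ell^2$-mass of the chain collapses; this is what pins the cutoff location.

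For the upper bound (the second limit), I would feed these estimates into the spectral bound (\ref{eq:spec}). Writing $t=t_n+s$ and using $(2n+k)\,|\lambda_1|^{2t_n}=1$ so that $(2n+k)^i|\lambda_1|^{2ti}=(|\lambda_1|^{2s})^i$, grouping by eigenvalue class gives
\begin{align*}
\sum_{i=1}^{|\Omega|-1}\lambda_i^{2t}=\sum_{i=1}^{n}m_i|\lambda_i|^{2t}\le\sum_{i=1}^{n}\frac{(2n+k)^i}{i!}\,|\lambda_1|^{2ti}\le e^{|\lambda_1|^{2s}}-1 .
\end{align*}
Since $|\lambda_1|^{2s}=(1+k/n)^{-2s}$ and $(1+k/n)^{-2\lambda w_n}\le e^{-c\lambda}$ for a constant $c>0$ uniformly in the range $k=\Oh(n)$, setting $s=\lambda w_n$ yields $d(t_n+\lambda w_n)\le\frac12\sqrt{e^{e^{-c\lambda}}-1}\to0$ as $\lambda\to\infty$.

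For the lower bound (the first limit), I would use a distinguishing statistic built from the $\lambda_1$-eigenspace. Fixing the start vertex $A_0$, the centered coordinate functions $f_j(A)=\mathbf{1}[j\in A]-\frac{n}{2n+k}$ are $\lambda_1$-eigenfunctions (a one-step average shows $Pf_j=-\frac{n}{n+k}f_j$), and I would take $\Phi=\sum_{j\in A_0}f_j$, i.e. $\Phi(X_t)=|A_0\cap X_t|-\frac{n^2}{2n+k}$, so that $\Exs{A_0}{\Phi(X_t)}=\lambda_1^t\Phi(A_0)$ with $\Phi(A_0)=\frac{n(n+k)}{2n+k}=\Theta(n)$; the squared signal is thus $|\lambda_1|^{2t}\,\Theta(n^2)$. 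To bound the noise I would invoke Wilson's variance contraction: if $R=\max_y\Vars{y}{\Phi(X_1)}$, then $\Vars{A_0}{\Phi(X_t)}\le R/(1-\lambda_1^2)$ and also $\Vars{\pi}{\Phi}\le R/(1-\lambda_1^2)$. Since $|A_0\cap X_1|$ is hypergeometric, $R=\Theta(k)$, while $1-\lambda_1^2=\frac{k(2n+k)}{(n+k)^2}=\Theta(k/n)$, so the noise variance is $\Theta(n)$. Chebyshev's inequality on the event $\{|\Phi|>\tfrac12|\lambda_1^t\Phi(A_0)|\}$ then gives $d(t)\ge 1-\Oh\!\left(\tfrac{1}{|\lambda_1|^{2t}\,n}\right)$, and at $t=t_n-\lambda w_n$ one has $|\lambda_1|^{2t}n=\frac{n}{2n+k}(1+k/n)^{2\lambda w_n}\ge c'e^{c\lambda}$ uniformly in $k=\Oh(n)$, so $d(t_n-\lambda w_n)\to1$ as $\lambda\to\infty$.

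The main obstacle, carrying most of the work, is the lower bound: proving the uniform one-step variance estimate $R=\Theta(k)$ from the hypergeometric law of $|A_0\cap X_1|$ and checking that the Wilson contraction and the resulting signal-to-noise ratio behave uniformly across the entire regime $k=\Oh(n)$ (from $k=1$, where $w_n=\Theta(n)$, to $k=\Theta(n)$, where $w_n=\Theta(1)$). A secondary subtlety is that $\lambda_1<0$, so the conditional mean oscillates in sign; this is harmless, since the Chebyshev separation only uses $|\lambda_1^t\Phi(A_0)|$ and the variance recursion involves only $\lambda_1^2$. Finally, the two thresholds $(2n+k)|\lambda_1|^{2t}=1$ and $|\lambda_1|^{2t}n=\Theta(1)$ differ by only $\log_{1+k/n}\Theta(1)=\Oh(n/k)=\Oh(w_n)$, which is exactly what makes the matching upper and lower estimates meet within a single window of width $w_n$.
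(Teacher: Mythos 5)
Your proposal is correct and takes essentially the same route as the paper: the upper bound is the identical spectral computation (Lemma~\ref{lem:tra} with Corollary~\ref{cor:god}, $|\lambda_i|\le|\lambda_1|^i$, $\binom{2n+k}{i}\le(2n+k)^i/i!$, and the $e^{g(t)}-1$ estimate), and your lower bound via the eigenfunction statistic with Wilson's variance contraction is exactly what the paper carries out by hand in Lemma~\ref{lem:main} and Proposition~\ref{pro:lower}, whose statistic $f_t=|X_t\cap[n]|$ obeys the same mean recursion with ratio $\lambda_1=-\frac{n}{n+k}$ and the same law-of-total-variance bound with one-step variance $O(k)$. The only differences are cosmetic: you handle $k=o(n)$ and $k=\Theta(n)$ uniformly where the paper splits into two cases, and you intersect with the start vertex $A_0$ rather than with the disjoint set $[n]$.
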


We now give the proof of Theorem~\ref{thm:main} using Proposition~\ref{pro:upper} and \ref{pro:lower}, whose statements and proofs are deferred to later sections.

\begin{proof}
  For the proof of the upper bound on the mixing time, we use the spectrum of the transition matrix.
 Applying Proposition~\ref{pro:upper} implies that 
 \[
 \lim_{c\rightarrow\infty}\liminf_{n\rightarrow\infty}d_n \Big(\frac{1}{2}\log_{1+k/n} (2n+k) +c\frac{n}{k} \Big)=0.
 \]
  We establish the lower bound by considering the vertices visited by a random walk starting from $\{n+1,\ldots,2n\}$ and their intersection with $[n]=\{ 1,\ldots, n\}$. For any step, we compute the expected size of the intersection and derive an upper bound on its variance  (to stationarity). Then applying Proposition~\ref{pro:lower}  results into
\begin{align*}
\lim_{c\rightarrow\infty}\liminf_{n\rightarrow\infty}d_n\left(\frac{1}{2}\log_{1+k/n} (2n+k) -c\frac{n}{k}\right)&=0.
\end{align*}
Combining these findings establishes a cutoff at $\frac{1}{2}\log_{1+k/n} (2n+k) $ with a cutoff window of size $O(\frac{n}{k})$ for  $k=O(n)$.
\end{proof}
%Throughout  this  paper, we follow the  notation and definitions of \cite{lev}.

\section{Upper Bound on the Variation Distance}

To prove our results, we need two  lemmas, the lemma below can be found in~\cite[Lemma~12.16]{lev}.  
\begin{lem}[{\cite[Lemma~12.16]{lev}}]\label{lem:tra}
 Let $P$ be a reversible transition matrix with eigenvalues 

\[
 1=\lambda_0\geq\lambda_1\geq\dots\geq\lambda_{|\Omega|-1}.
\]
If the Markov chain is transitive, then for every $x \in \Omega$
\[
 4 \| P^t(x,.)-\pi \|^2_{TV} \leq \sum_{i=1}^{|\Omega|-1}\lambda^{2t}_i.
\]
\end{lem}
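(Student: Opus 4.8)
The plan is to route through the $\ell^2(\pi)$ distance, exploit reversibility via a spectral decomposition to get an exact expression for this distance in terms of the eigenvalues and eigenfunctions, and finally use transitivity to eliminate the dependence on the starting state $x$.

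First I would record the standard Cauchy--Schwarz reduction from total variation to the $\ell^2(\pi)$ distance. Writing $\langle f,g\rangle_\pi=\sum_y\pi(y)f(y)g(y)$, for any probability measure $\mu$ one has
\[
2\|\mu-\pi\|_{TV}=\sum_y|\mu(y)-\pi(y)|=\sum_y \pi(y)^{1/2}\cdot\frac{|\mu(y)-\pi(y)|}{\pi(y)^{1/2}}\le\Big(\sum_y\frac{(\mu(y)-\pi(y))^2}{\pi(y)}\Big)^{1/2},
\]
using $\sum_y\pi(y)=1$. Applying this with $\mu=P^t(x,\cdot)$ reduces the claim to bounding the $\ell^2(\pi)$ quantity $\sum_y(P^t(x,y)-\pi(y))^2/\pi(y)$.

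Next I would diagonalise. Since $P$ is reversible it is self-adjoint with respect to $\langle\cdot,\cdot\rangle_\pi$, so there is an orthonormal basis $f_0,\dots,f_{|\Omega|-1}$ of real eigenfunctions with $Pf_i=\lambda_i f_i$ and $f_0\equiv 1$. Expanding the kernel gives $P^t(x,y)/\pi(y)=\sum_i\lambda_i^t f_i(x)f_i(y)$, so that $(P^t(x,y)-\pi(y))/\pi(y)=\sum_{i\ge1}\lambda_i^t f_i(x)f_i(y)$. Substituting and using orthonormality $\langle f_i,f_j\rangle_\pi=\delta_{ij}$, the cross terms collapse and
\[
\sum_y\frac{(P^t(x,y)-\pi(y))^2}{\pi(y)}=\sum_{i,j\ge1}\lambda_i^t\lambda_j^t f_i(x)f_j(x)\langle f_i,f_j\rangle_\pi=\sum_{i\ge1}\lambda_i^{2t}f_i(x)^2=:g(x).
\]
Combining with the first step yields $4\|P^t(x,\cdot)-\pi\|_{TV}^2\le g(x)$, valid for any reversible chain.

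The main obstacle, and the only place transitivity enters, is showing that $g(x)$ does not depend on $x$, after which it must equal its own average. For a transitive chain $\pi$ is uniform, so averaging over $x$ and using $\frac{1}{|\Omega|}\sum_x f_i(x)^2=\langle f_i,f_i\rangle_\pi=1$ gives $\frac{1}{|\Omega|}\sum_x g(x)=\sum_{i\ge1}\lambda_i^{2t}$, which is exactly the target right-hand side. To see that $g$ is constant, fix any two states $x,y$ and a bijection $\sigma$ of $\Omega$ that preserves $P$, i.e.\ $P(\sigma a,\sigma b)=P(a,b)$, with $\sigma x=y$; such $\sigma$ exists by transitivity. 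For each eigenvalue $\lambda$ introduce the reproducing kernel $K_\lambda(z,w)=\sum_{i:\lambda_i=\lambda}f_i(z)f_i(w)$ of the $\lambda$-eigenspace, whose diagonal is $g_\lambda(z):=K_\lambda(z,z)=\sum_{i:\lambda_i=\lambda}f_i(z)^2$; this kernel is independent of the chosen orthonormal basis of the eigenspace. The composition operator $(U_\sigma f)(z)=f(\sigma^{-1}z)$ is unitary on $\ell^2(\pi)$ and commutes with $P$, hence preserves each eigenspace and commutes with the associated orthogonal projection, which forces $K_\lambda(\sigma z,\sigma w)=K_\lambda(z,w)$ and in particular $g_\lambda(\sigma z)=g_\lambda(z)$. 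Taking $z=x$ and summing $g_\lambda(y)=g_\lambda(x)$ against $\lambda^{2t}$ over all eigenvalues gives $g(y)=g(x)$, so $g$ is constant and equal to $\sum_{i\ge1}\lambda_i^{2t}$, proving the lemma. The subtlety I would take care to phrase cleanly is precisely this basis-independence of $g_\lambda$ together with the transfer of $\sigma$-invariance from the operator to its kernel diagonal.
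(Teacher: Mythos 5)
Your proof is correct and follows essentially the same route as the cited source (the paper itself states Lemma~\ref{lem:tra} without proof, importing it from \cite[Lemma~12.16]{lev}): the Cauchy--Schwarz reduction to the $\ell^2(\pi)$ distance, the spectral decomposition of the reversible kernel giving $g(x)=\sum_{i\ge 1}\lambda_i^{2t}f_i(x)^2$, and averaging over $x$ via transitivity. The only stylistic difference is your reproducing-kernel argument for the constancy of $g$: this detour is unnecessary, since your own identity $g(x)=\sum_y\bigl(P^t(x,y)-\pi(y)\bigr)^2/\pi(y)$ exhibits $g$ as a basis-free quantity that is manifestly invariant under any transition-preserving bijection $\sigma$ with $\sigma x=y$, which yields $g(x)=g(y)$ in one line.
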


To employ Lemma~\ref{lem:tra}, we need to know all eigenvalues and their multiplicities. The spectrum of the
adjacency matrix of Kneser graphs was computed in \cite[Section~9.4]{god}  and \cite{rein}. 
\begin{thm}[{\cite[Section~9.4]{god}  and \cite{rein}}]\label{thm:god}
The adjacency matrix of Kneser graphs $K(2n+k,n)$ has the following spectrum
\begin{align*}
 (-1)^i{n+k-i \choose n-i} \quad \text{ with multiplicity of~~~${2n+k \choose i}- {2n+k\choose {i-1}}$}, \quad i=0,\ldots,n,
\end{align*}
where ${2n+k\choose {-1}} = 0$. 
\end{thm}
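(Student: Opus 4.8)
The plan is to diagonalize the adjacency matrix $A$ of $K(2n+k,n)$ by exploiting that $A$ commutes with the natural action of the symmetric group $S_{2n+k}$ on the vertex set $\binom{[2n+k]}{n}$; this is exactly the Bose--Mesner algebra of the Johnson scheme, so the eigenspaces of $A$ are $S_{2n+k}$-submodules of the permutation module $\mathbb{C}^{\binom{[2n+k]}{n}}$. Rather than quoting the decomposition of this module into Specht modules, I would build an explicit, self-contained filtration. Write $m=2n+k$. For a subset $T$ with $|T|=i$, let $f_T \in \mathbb{C}^{\binom{[m]}{n}}$ be the indicator $f_T(X)=\mathbf{1}[T\subseteq X]$, and set $M_i=\operatorname{span}\{f_T:|T|=i\}$. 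A short double count gives $\sum_{T\supseteq S}f_T=(n-i+1)\,f_S$ for $|S|=i-1$, so the subspaces are nested, $M_0\subseteq M_1\subseteq\cdots\subseteq M_n$; and since $i\le n$ and $i+n\le m$, the inclusion vectors $\{f_T:|T|=i\}$ are linearly independent, whence $\dim M_i=\binom{m}{i}$.

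The key step is to compute the action of $A$ on the generators $f_T$. Evaluating at a vertex $X$,
\[
(Af_T)(X)=\sum_{B:\,B\cap X=\emptyset}f_T(B)=\#\{B:\,B\cap X=\emptyset,\ T\subseteq B\}=\binom{m-n-i}{n-i}\,\mathbf{1}[T\cap X=\emptyset],
\]
since a neighbour $B$ of $X$ that contains $T$ must draw its remaining $n-i$ elements from the $m-n-i$ points outside $X\cup T$, and no such $B$ exists unless $T\cap X=\emptyset$. Expanding $\mathbf{1}[T\cap X=\emptyset]=\sum_{S\subseteq T}(-1)^{|S|}f_S(X)$ by inclusion--exclusion yields the clean recursion
\[
Af_T=\binom{m-n-i}{n-i}\sum_{S\subseteq T}(-1)^{|S|}f_S .
\]
In particular $AM_i\subseteq M_i$, and the only top-degree term ($S=T$) shows that on $M_i$ modulo $M_{i-1}$ the matrix $A$ acts by the scalar $(-1)^i\binom{m-n-i}{n-i}$.

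To promote this to a genuine eigenvalue I would pass to the orthogonal pieces $U_i=M_i\cap M_{i-1}^{\perp}$. Because $A$ is symmetric and preserves both $M_i$ and $M_{i-1}$, it preserves $U_i$. For $v\in U_i$ the recursion gives $Av=(-1)^i\binom{m-n-i}{n-i}v+w$ with $w\in M_{i-1}$; but $Av$ and $v$ both lie in $U_i\perp M_{i-1}$, forcing $w=0$, so
\[
Av=(-1)^i\binom{m-n-i}{n-i}\,v=(-1)^i\binom{n+k-i}{n-i}\,v .
\]
The multiplicity of this eigenvalue is $\dim U_i=\dim M_i-\dim M_{i-1}=\binom{m}{i}-\binom{m}{i-1}$, and as $i$ runs over $0,\ldots,n$ these multiplicities telescope to $\binom{m}{n}$, so the $U_i$ exhaust the whole space and account for all eigenvalues.

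The main obstacle is the final extraction: ensuring that the scalar read off from the filtration $M_i/M_{i-1}$ really is an eigenvalue of $A$ on $U_i$, rather than merely a diagonal entry of a triangular action. This rests on the symmetry of $A$ (to get invariance of $U_i$) and on the dimension bookkeeping $\dim M_i=\binom{m}{i}$, which in turn needs the linear independence of the $\{f_T\}$; this is the one place where the hypothesis $m\ge 2n$ (equivalently $i+n\le m$ for all $i\le n$) is essential. Once these are in place the eigenvalues, their signs, and their multiplicities all drop out of the single recursion above.
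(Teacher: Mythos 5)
Your argument is correct, but note that the paper itself offers no proof of Theorem~\ref{thm:god}: it imports the spectrum wholesale from \cite[Section~9.4]{god} and \cite{rein}. So your proposal is not a variant of an internal argument but a self-contained reconstruction of the standard proof underlying those citations, namely the filtration of the permutation module $\mathbb{C}^{\binom{[m]}{n}}$ by the row spaces of the inclusion matrices of the Johnson scheme. The individual steps check out: the double count $\sum_{T\supseteq S}f_T=(n-i+1)f_S$ does give $M_{i-1}\subseteq M_i$ (here $i\le n$ guarantees the coefficient is nonzero); the common-neighbour count behind $(Af_T)(X)=\binom{m-n-i}{n-i}\mathbf{1}[T\cap X=\emptyset]$ is right; the inclusion--exclusion identity follows from $\sum_{S\subseteq T\cap X}(-1)^{|S|}=(1-1)^{|T\cap X|}$; and your promotion of the triangular scalar to a genuine eigenvalue on $U_i=M_i\cap M_{i-1}^{\perp}$ is exactly where the symmetry of $A$ earns its keep, since $A$ preserves $U_i$ and hence the residue $Av-(-1)^i\binom{m-n-i}{n-i}v$ lies in $U_i\cap M_{i-1}=\{0\}$. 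Since for $|T|=n$ the $f_T$ are the vertex indicators, $M_n$ is the whole space and the orthogonal sum $U_0\oplus\cdots\oplus U_n$ accounts for all $\binom{m}{n}$ eigenvalues; moreover the values $(-1)^i\binom{n+k-i}{k}$ alternate in sign with strictly decreasing magnitude, so they are pairwise distinct and the stated multiplicities are exact. The one ingredient you assert rather than prove is $\dim M_i=\binom{m}{i}$, i.e.\ the full row rank of the inclusion matrix of $i$-sets versus $n$-sets when $i\le n\le m-i$. This is Gottlieb's rank theorem, and it is genuinely load-bearing: both the nonvanishing of each $U_i$ and the multiplicity formula hinge on it. You correctly flag it as the place where $m\ge 2n$ enters, but in a self-contained write-up it should carry a citation or a short argument --- for instance, summing a putative relation $\sum_{|T|=i}c_Tf_T=0$ over all $n$-sets disjoint from a fixed $i$-set $S$ yields $\binom{m-2i}{n-i}\sum_{T\cap S=\emptyset}c_T=0$, reducing independence to the classical nonsingularity of the $i$-set disjointness matrix for $m\ge 2i$. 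I would classify this as a citation-level gap rather than a mathematical one; with that reference added, your proof is complete and is essentially the textbook derivation the paper points to.
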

As $K(2n+k,n)$ is a $\binom{n+k}{n}$-regular graph, we immediately obtain the following corollary.
\begin{cor}\label{cor:god}
 The transition matrix of the simple random walk on $K(2n+k,n)$ has the following spectrum:
\[
 (-1)^i\frac{{n+k-i\choose n-i}}{{n+k\choose n}} \quad \text{ with multiplicity of~~~~${2n+k \choose i}- {2n+k\choose {i-1}}$}, \quad i=0,\ldots,n.
\]
 
\end{cor}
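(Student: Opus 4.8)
The plan is to observe that $K(2n+k,n)$ is a regular graph and that the transition matrix of the simple random walk is obtained from the adjacency matrix by dividing by the common degree; the corollary then follows by scaling the spectrum in Theorem~\ref{thm:god}.

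First I would confirm the degree. Each vertex of $K(2n+k,n)$ is an $n$-element subset $A$ of $[2n+k]$, and by definition its neighbors are exactly the $n$-element subsets disjoint from $A$. These are precisely the $n$-element subsets of the complement $[2n+k]\setminus A$, which has $n+k$ elements, so every vertex has exactly $\binom{n+k}{n}$ neighbors. Hence the graph is $\binom{n+k}{n}$-regular, and the degree $d=\binom{n+k}{n}$ is independent of the chosen vertex.

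Since at each step the walk moves to a uniformly chosen neighbor, its transition matrix is $P=\frac{1}{d}A$, where $A$ is the adjacency matrix of $K(2n+k,n)$ and $d=\binom{n+k}{n}$. Multiplying a matrix by the scalar $1/d$ scales each eigenvalue by $1/d$ while leaving the eigenspaces, and therefore the multiplicities, unchanged. Substituting the eigenvalues from Theorem~\ref{thm:god} then yields eigenvalue $(-1)^i\binom{n+k-i}{n-i}\big/\binom{n+k}{n}$ with multiplicity $\binom{2n+k}{i}-\binom{2n+k}{i-1}$ for each $i=0,\ldots,n$, which is exactly the claimed spectrum. There is no genuine obstacle here: the only content is the regularity computation and the observation that $P$ is a scalar multiple of $A$, so the statement is an immediate corollary of Theorem~\ref{thm:god}.
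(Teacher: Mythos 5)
Your proposal is correct and matches the paper exactly: the paper simply remarks that $K(2n+k,n)$ is $\binom{n+k}{n}$-regular, so the transition matrix is the adjacency matrix scaled by $1/\binom{n+k}{n}$, and the corollary follows immediately from Theorem~\ref{thm:god}. Your verification of the degree (neighbors of $A$ are the $n$-subsets of the $(n+k)$-element complement) is the same observation the paper leaves implicit.
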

%\begin{proof}
%The simple random walk chooses in each step a random neighbor of the current vertex. Hence, if $A$ denotes the adjacency matrix of $K(2n+k,n)$ (which is a ${n+k \choose n}$-regular graph), then the transition matrix of simple randon walk $P$ can be written as follows:
%\[
% P=\frac{1}{{n+k \choose n}}A.
%\]
%\end{proof}

\begin{pro}\label{pro:upper}
 We have the following upper bounds on the total variation distance of the simple random walk on $K(2n+k,n)$.
\begin{itemize}
\item If $k= o(n)$, then for every constant $c \geq 1/2$,
\[
      d\left(\frac{1}{2}\log_{1+k/n} (2n+k) +c\frac{n}{k}\right)\leq e^{-c}.
\]

\item If $k = \Omega(n)$, then  for every constant $c$ with $ (1+\frac
{k}{n})^{-c}\leq \frac{1}{2}$,
\[
    d\left(\frac{1}{2}\log _{1+k/n}(2n+k)+c\right)\leq (1+k/n)^{-c}.
\]
 \end{itemize}
\end{pro}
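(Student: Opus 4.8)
The plan is to feed the spectrum from Corollary~\ref{cor:god} directly into the transitive bound of Lemma~\ref{lem:tra}, and then show that once $t$ passes $t_0 := \frac{1}{2}\log_{1+k/n}(2n+k)$ the spectral sum collapses to a convergent geometric-type series governed by a single parameter. Writing $\lambda_i = (-1)^i\binom{n+k-i}{n-i}/\binom{n+k}{n}$ and $m_i = \binom{2n+k}{i}-\binom{2n+k}{i-1}$ for $0\le i\le n$, the $i=0$ group is exactly the eigenvalue $\lambda_0=1$ with multiplicity one, so Lemma~\ref{lem:tra} gives $4\,d(t)^2 \le \sum_{i=1}^{n} m_i\,|\lambda_i|^{2t}$, the even powers having absorbed the signs. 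The remainder of the argument is then just a matter of replacing $|\lambda_i|$ and $m_i$ by clean closed-form upper bounds and summing.

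First I would rewrite $|\lambda_i|$ as a telescoping product: cancelling factorials yields $|\lambda_i| = \prod_{j=0}^{i-1}\frac{n-j}{n+k-j}$, and since each factor is decreasing in $j$ (its derivative in $j$ is $-k/(n+k-j)^2<0$), the largest factor is the $j=0$ term $\frac{n}{n+k}$; hence $|\lambda_i|\le (1+k/n)^{-i}$. For the multiplicities I would simply discard the negative term and use $m_i \le \binom{2n+k}{i}\le (2n+k)^i/i!$. Combining these, and using the crucial identity $(2n+k)(1+k/n)^{-2t_0}=1$, the $i$-th summand at $t=t_0+s$ is bounded by $\frac{1}{i!}\big[(2n+k)(1+k/n)^{-2t}\big]^i = \frac{a^i}{i!}$ where $a := (1+k/n)^{-2s}$. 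Summing over $i\ge 1$ gives the single clean estimate
\[
4\,d(t_0+s)^2 \;\le\; \sum_{i=1}^{\infty}\frac{a^i}{i!} \;=\; e^{a}-1 .
\]
Recognizing that the base $(2n+k)(1+k/n)^{-2t_0}$ telescopes to exactly $1$ is what makes the offset $s$, rather than $t$ itself, the natural variable and is precisely what drives the cutoff.

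It then remains to specialize $s$ and bound $e^{a}-1$ by a linear function of $a$. For $k=\Omega(n)$ I take $s=c$, so $a=(1+k/n)^{-2c}=\big[(1+k/n)^{-c}\big]^2\le \tfrac14$ by the hypothesis $(1+k/n)^{-c}\le\tfrac12$; the elementary inequality $e^{a}-1\le 4a$ on $[0,\tfrac14]$ (valid since $4-e^{a}>0$ there) yields $d(t_0+c)^2\le a$, that is $d(t_0+c)\le (1+k/n)^{-c}$. For $k=o(n)$ I take $s=c\,\frac{n}{k}$, so $a=(1+k/n)^{-2cn/k}$, and the only genuine work is asymptotic: using $\frac{n}{k}\ln(1+k/n)=1-O(k/n)\to 1$ one gets $a=e^{-2c}(1+o(1))$, and since $c\ge\tfrac12$ forces $a<1$ for large $n$, the bound $e^{a}-1\le 2a\le 4e^{-2c}$ gives $d\big(t_0+c\frac{n}{k}\big)\le e^{-c}$. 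I expect the main obstacle to be bookkeeping rather than conceptual: establishing the telescoping product bound on $|\lambda_i|$ cleanly, and in the $k=o(n)$ regime tracking the $(1+k/n)^{n/k}\to e$ limit tightly enough to recover the stated constant in the exponent.
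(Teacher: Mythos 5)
Your proposal is correct and follows essentially the same route as the paper: both feed the spectrum of Corollary~\ref{cor:god} into Lemma~\ref{lem:tra}, bound $|\lambda_i|\le (1+k/n)^{-i}$ and the multiplicities by $(2n+k)^i/i!$, and sum to $e^{g(t)}-1$ with $g(t)=(2n+k)(1+k/n)^{-2t}$, followed by the same two-case choice of $t$. If anything, your treatment of the $k=o(n)$ case is slightly more careful than the paper's: the paper asserts $(1+k/n)^{-2cn/k}\le e^{-2c}$, which in fact holds with the inequality reversed because $(1+k/n)^{n/k}\le e$, whereas your asymptotic $a=e^{-2c}(1+o(1))$ together with the constant-factor slack in $e^{a}-1\le 2a$ correctly absorbs this discrepancy and still recovers the stated bound $e^{-c}$.
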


\begin{proof}
By Corollary~\ref{cor:god} we have 
\[
 |\lambda_i|= \left|(-1)^i\frac{n(n-1)(n-2) \cdot \ldots \cdot (n-i+1)}{(n+k)(n+k-1)(n+k-2) \cdot \ldots \cdot (n+k-i+1)} \right|\leq  \left(\frac{n}{n+k} \right)^i=\left(1-\frac{k}{n+k} \right)^i.
\]
Now define
\begin{align*}
 g(t)= \left(1-\frac{k}{n+k} \right)^{2t}(2n+k)= \left(1+\frac{k}{n} \right)^{-2t}(2n+k).
\end{align*}
Applying~Lemma~\ref{lem:tra} yields,
\begin{align*}
4\| P^t(x,.)-\pi \|^2_{TV} \leq &\sum_{i=1}^n \left(1-\frac{k}{n+k} \right)^{i2t} \cdot \left\{{2n+k \choose i}- {2n+k\choose {i-1}} \right\}\\
\leq &\sum_{i=1}^{n}\frac{\left( (1-\frac{k}{n+k})^{2t}(2n+k) \right)^i}{i!}\\
\leq& e^{g(t)}-1.
\end{align*}
Using the fact that for every $x$, $0\leq x\leq 1/2$, $e^x-1\leq 2x$, we conclude that for any $0\leq g(t)\leq 1/2$,
\begin{align}
\| P^t(x,.)-\pi \|_{TV} \leq \sqrt{g(t)/2}\label{eq:upper}
\end{align}

We consider two cases:\\
\textbf{Case 1.} $k=o(n)$. We choose $t=\frac{1}{2}\log_{1+k/n}(2n+k) +c\frac{n}{k}$, where $c\geq 1/2$. Hence,
\[
 g(t)= \left(1 + \frac{k}{n} \right)^{-2t}(2n+k)=\left(1+ \frac{k}{n} \right)^{-2\frac{cn}{k}}\leq e^{-2c}< 1/2,
\]
and by inequality (\ref{eq:upper}),
\[
d\left(\frac{1}{2}\log_{1+k/n}(2n+k) +c\frac{n}{k}\right)\leq e^{-c}
\]
\textbf{Case 2.} $k = \Omega(n)$. Now we choose $t=\frac{1}{2}\log_{1+k/n}(2n+k)+c$. 
Then, 
\[
 g(t)= \left(1 + \frac{k}{n} \right)^{-2t}(2n+k)=  \left(1+\frac{k}{n} \right)^{-2c}\leq 1/2,
\]
where the last inequality holds due to assumption on $c$.
 Hence, inequality (\ref{eq:upper}) yields
\[
d\left(\frac{1}{2}\log_{1+k/n}(2n+k)+c\right)\leq  \left(1+\frac{k}{n}\right)^{-c}
\]
\end{proof}

\section{Lower Bound on the Variation Distance}
 In order to find a lower bound for variation distance we use  the following lemma which was applied in \cite{wilson}. For further discussion on this method we refer the reader to \cite{laurent}.
Let $f$ be a real-valued function on $\Omega$. We use  $\Exs{\mu}{f}$ and $\Vars{\mu}{f}$ to denote the expectation and variance of $f$ under distribution of $\mu$.

\begin{lem}[{\cite[Proposition~7.8]{lev}}]\label{lem:low} 
Let $\mu$ and $\nu$ be two probability distributions on $\Omega$ and $f:\Omega\rightarrow \mathbb{R}$ be an arbitrary function. Suppose that  $\max\{\Vars{\mu}{f}, \Vars{\nu}{f}\}\leq\sigma^2_{*}$. Then if  
\[
\left|\Exs{\mu}{f}- \Exs{\nu}{f}\right|\geq r\sigma_{*},
\] 
then
\[
\|\mu -\nu \|_{TV}\geq 1-\frac{8}{r^2}.
\] 
\end{lem}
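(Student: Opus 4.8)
The plan is to use $f$ itself as a distinguishing statistic and control its fluctuations via Chebyshev's inequality, which is the standard route for lower-bounding total variation distance by the method of \cite{wilson}. Without loss of generality assume $\Exs{\mu}{f}\leq\Exs{\nu}{f}$, and write $a=\Exs{\mu}{f}$ and $b=\Exs{\nu}{f}$, so that the hypothesis reads $b-a\geq r\sigma_{*}$. The key idea is to cut $\Omega$ at the midpoint $c=(a+b)/2$ of the two means and to use the single event $A=\{x\in\Omega:f(x)\leq c\}$ as a test set in the variational characterization $\|\mu-\nu\|_{TV}=\max_{B\subset\Omega}|\mu(B)-\nu(B)|$ recalled in the definition of total variation distance.

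First I would bound the probability that $f$ falls on the ``wrong'' side of the threshold under each measure. Under $\mu$ the mean is $a$, and the midpoint lies at distance $c-a=(b-a)/2\geq r\sigma_{*}/2$ above it; Chebyshev's inequality therefore gives $\mu(A^c)=\mu(f>c)\leq \Vars{\mu}{f}\big/\big((b-a)/2\big)^2\leq 4\sigma_{*}^2/(r\sigma_{*})^2=4/r^2$. Symmetrically, under $\nu$ the mean is $b$ and the midpoint sits at distance $b-c=(b-a)/2$ below it, so $\nu(A)=\nu(f\leq c)\leq \Vars{\nu}{f}\big/\big((b-a)/2\big)^2\leq 4/r^2$. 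Both estimates use only the two hypotheses $\max\{\Vars{\mu}{f},\Vars{\nu}{f}\}\leq\sigma_{*}^2$ and $b-a\geq r\sigma_{*}$.

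Finally I would combine the two tail bounds. Since $\mu(A)=1-\mu(A^c)\geq 1-4/r^2$ while $\nu(A)\leq 4/r^2$, the event $A$ separates the measures by $\mu(A)-\nu(A)\geq 1-8/r^2$, and testing $B=A$ in the variational formula immediately yields $\|\mu-\nu\|_{TV}\geq 1-8/r^2$. There is no genuine obstacle: the whole argument is a short second-moment computation. The only point needing care is the choice of threshold, since placing it at the midpoint $c=(a+b)/2$ is precisely what makes the two Chebyshev tails symmetric and keeps the constant equal to $8=4+4$; any asymmetric split would still give a bound of the form $1-O(1/r^2)$ but with a larger constant. Note also that the conclusion is only informative once $r>2\sqrt{2}$, so that the right-hand side is positive.
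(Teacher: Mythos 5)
Your proof is correct and is precisely the standard argument behind \cite[Proposition~7.8]{lev}, which the paper cites without reproving: threshold $f$ at the midpoint of the two means and apply Chebyshev's inequality under each measure, with the test set $A=\{f\leq c\}$ giving $\mu(A)-\nu(A)\geq 1-4/r^2-4/r^2$. Since the paper simply imports this lemma from the reference, there is nothing to compare beyond noting that your argument matches the source's proof.
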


 Before proceeding, we recall that a random variable $Y\sim H(N,m,n)$ has  a  hypergeometric distribution if
 for every  $ \max\{0,n+m-N\}\leq i \leq \min\{n,m\}$,
 $\Pr{Y=i} =\frac{{m\choose i}{{N-m}\choose {n-i}}}{{N\choose n}}$.
The expected value and variance of $Y$ are
$\Ex{Y} =\frac{nm}{N}$ and $\Var{Y}=\frac{nm(N-m)(N-n)}{N^2(N-1)}$ respectively.

\begin{lem}\label{lem:main}
Let $X_t$ be the vertex visited at step $t$ by a simple random walk on $K(2n+k,n)$ which starts at vertex $X_0=\{n+1,n+2,\ldots,2n\}$. Let $f_t=f(X_t)=|X_t\cap[n]|$, so $f_0=0$. Moreover, define a random variable $f=|X\cap[n]|$ with $X$ being a vertex chosen uniformly at random from $K(2n+k,n)$. Then for any $t \in \mathbb{N}$,
\[
\Var{f_t}\leq  C(n,k)\Var{f},
\]
where $C(n,k)=(1+o(1))(1+k/n)$ for $k=O(n)$.
%$.{\color{red} and otherwise $C(n,k)=1+o(1).$}
\end{lem}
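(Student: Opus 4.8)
The plan is to reduce the whole statement to the behaviour of the single statistic $f_t=|X_t\cap[n]|$, whose one-step conditional law depends on $X_t$ only through $f_t$, and then to compare its variance to the stationary variance via the law of total variance. First I would identify the two relevant distributions explicitly. Under the uniform (stationary) measure, $X$ is a uniform $n$-subset of $[2n+k]$, so $f=|X\cap[n]|$ is hypergeometric $H(2n+k,n,n)$, giving $\Var{f}=\frac{n^2(n+k)^2}{(2n+k)^2(2n+k-1)}$. For the walk, one step sends $X_t$ to a uniform $n$-subset of its complement $[2n+k]\setminus X_t$, a set of size $n+k$ that contains exactly $n-f_t$ elements of $[n]$; hence, conditionally on $f_t$, the variable $f_{t+1}$ is hypergeometric $H(n+k,\,n-f_t,\,n)$.

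From the hypergeometric formulas I would read off $\Ex{f_{t+1}\mid f_t}=\frac{n(n-f_t)}{n+k}=a(n-f_t)$ with $a:=\frac{n}{n+k}$, and $\Var{f_{t+1}\mid f_t}=\frac{nk}{(n+k)^2(n+k-1)}(n-f_t)(k+f_t)$. The conditional mean is affine in $f_t$, so the law of total variance produces the clean recurrence
\[
\Var{f_{t+1}}=a^2\,\Var{f_t}+\Ex{\Var{f_{t+1}\mid f_t}},
\]
where the first term is $\Var{\Ex{f_{t+1}\mid f_t}}=a^2\Var{f_t}$. This isolates the nonnegative forcing term $\Ex{\Var{f_{t+1}\mid f_t}}$ as the only place where the trajectory of the mean enters (through $\Ex{(n-f_t)(k+f_t)}$, which couples $\Var{f_{t+1}}$ to both $\Ex{f_t}$ and $\Var{f_t}$).

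The decisive simplification is to bound the forcing term by its worst case, uniformly in $t$. Since $(n-x)(k+x)\le\big(\tfrac{(n-x)+(k+x)}{2}\big)^2=\frac{(n+k)^2}{4}$ for every $x$, I obtain $\Ex{\Var{f_{t+1}\mid f_t}}\le\frac{nk}{4(n+k-1)}$ for all $t$. Starting from $f_0=0$, so $\Var{f_0}=0$, I would unroll the recurrence and sum the geometric series in $a^2<1$ to get
\[
\Var{f_t}\le \frac{nk}{4(n+k-1)}\cdot\frac{1}{1-a^2}=\frac{n(n+k)^2}{4(n+k-1)(2n+k)},
\]
using $1-a^2=\frac{k(2n+k)}{(n+k)^2}$. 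Dividing by $\Var{f}$, the ratio equals $\frac{(2n+k)(2n+k-1)}{4n(n+k-1)}=(1+o(1))\frac{(2n+k)^2}{4n(n+k)}$ for $k=O(n)$, and since $(2n+k)^2\le 4(n+k)^2$ yields $\frac{(2n+k)^2}{4n(n+k)}\le\frac{n+k}{n}=1+\frac{k}{n}$, this is at most $(1+o(1))(1+k/n)$, which is the claim.

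I expect the only genuine obstacle to be the middle step: a priori $\Var{f_t}$ is tied to the entire history of the mean, and the point is to recognize that this coupling can be severed by replacing the forcing term with its global maximum, after which the estimate collapses to a single geometric sum. A sharper argument that retains the mean trajectory $\Ex{f_t}=\frac{n^2}{2n+k}\bigl(1-(-a)^t\bigr)$ would yield a smaller constant, but the crude bound already delivers the stated factor $C(n,k)=(1+o(1))(1+k/n)$, so I would not pursue it.
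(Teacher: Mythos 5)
Your proposal is correct and takes essentially the same route as the paper's proof: the same identification of the one-step conditional law as hypergeometric on the complement (the paper writes it as $n-Y$ with $Y\sim H(n+k,f_t+k,n)$, which is your $H(n+k,n-f_t,n)$), the same law-of-total-variance recursion $\Var{f_{t+1}}=a^2\Var{f_t}+\Ex{\Var{f_{t+1}\mid f_t}}$ with the same AM--GM bound $(n-f_t)(k+f_t)\leq (n+k)^2/4$, and the same geometric-series summation yielding $\Var{f_t}\leq \frac{n(n+k)^2}{4(n+k-1)(2n+k)}$. Your closing ratio computation is merely a slightly more streamlined version of the paper's final comparison with $\Var{f}$, so there is nothing to flag.
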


\begin{proof} The random variable $f$ under $\pi$ has a hypergeometric distribution  $H(2n+k,n,n)$.
Hence,
\begin{align}
 \Ex{f}=\frac{n^2}{2n+k} \label{eq:unex},
\end{align}
and 
\begin{align}
 \Var{f}=\frac{n^2(n+k)^2}{(2n+k)^2(2n+k-1)}\label{eq:univar}.
\end{align}

%Suppose $X_0=\{n+1,n+2,...,2n\}$, $f(X_0)=f_0=0$. Idea is to estimate expectation and variance of $f_t$.

In step $t+1$ of the walk, an $n$-element subset of the complement of $X_t$ is chosen. If $f_t=s$, $|X_t\cap[n]|=s$, then $X^c_t$ has $n-s$ common  elements with $[n]$ and $s+k$ common elements with $[n]^c$. Therefore $f_{t+1}=n-Y$ where $Y$ has hypergeometric distribution $H(n+k,s+k,n)$. Hence, 
\begin{align*}
  \Ex{f_{t+1} \, \mid \, f_t=s} &= \Ex{n-Y}=n-\frac{(s+k)n}{n+k}=(n-s) \cdot \left(1-\frac{k}{n+k} \right)\\
  &=n \left(1-\frac{k}{n+k} \right)-\Ex{f_t} \left(1-\frac{k}{n+k} \right).
% &= n \left(1-\frac{k}{n+k} \right)- s \left(1-\frac{k}{n+k} \right),
 \end{align*}
%resulting into
%\begin{align*}\Ex{f_{t+1}}= n \left(1-\frac{k}{n+k} \right)-\Ex{f_t} \left(1-\frac{k}{n+k} \right).
%\end{align*}
Solving this recursion allows us to compute the expectation of $f_t$:
\begin{align}
  \Ex{f_{t}} &= n\sum_{i=1}^{t} \left[(-1)^{i+1} \left(1-\frac{k}{n+k} \right)^i \right]+\underbrace{(-1)^t\Ex{f_0}(1-\frac{k}{n+k})^t}_{=0}\notag\\
&= -n\frac{(\frac{k}{n+k}-1)^{t+1}-(\frac{k}{n+k}-1)}{\frac{k}{n+k}-2}%\notag\\
= \frac{n^2}{2n+k}+(-1)^{t+1}\frac{n(n+k)(1-\frac{k}{n+k})^{t+1}}{2n+k}.\label{eq:exp}
\end{align}
We have already shown that $\Ex{f_{t+1} \, \mid \, f_t}=n(1-\frac{k}{n+k})-{f_t}(1-\frac{k}{n+k})$, which immediately implies that
\begin{align*}
\Var{\Ex{f_{t+1} \, \mid \, f_t}} &= \left(1-\frac{k}{n+k} \right)^2\Var{f_t}.
\end{align*}
As observed earlier, the random variable $f_{t+1}$ conditioned on $f_t$ has distribution $n-Y$ where  $Y\sim H(n+k,f_t+k,n)$ which yields
\begin{align*}
 \Var{f_{t+1} \, \mid \, f_t} &=\Var{n-Y}=\Var{Y}=\frac{(f_t+k)(n-f_t)}{(n+k)^2}\times\frac{nk}{(n+k-1)}.
\end{align*}
Assume now that $A$ is an upper bound for $\frac{(f_t+k)(n-f_t)}{(n+k)^2}$ for every $f_t$; $A$ will be specified later. 
In the following, we  use the total law of variance to find a recursive formula for $\Var{f_t}$,
\begin{align*}
\Var{f_{t+1}} &=\Var{\Ex{f_{t+1} \, \mid \, f_t}}+\Ex{\Var{f_{t+1} \, \mid \, f_t}}\\
&\leq \left(1-\frac{k}{n+k} \right)^2\Var{f_{t}}+A\frac{nk}{(n+k-1)}.
\end{align*}

%Hence,
%\begin{align*}
% \Var{f_{t+1}} &= \Var{\Ex{f_{t+1} \, \mid \, f_t}}+\Ex{\Var{f_{t+1} \, \mid \, f_t}}\\
%&\leq \left(1-\frac{k}{n+k} \right)^2\Var{f_{t}}+A\frac{nk}{(n+k-1)}.
%\end{align*}
Using this recursion, we obtain the following upper bound on $\Var{f_t}$:
\begin{align*}
  \Var{f_t} &\leq  A\frac{nk}{n+k-1}\sum_{i=0}^{t-1} \left[ \left(1-\frac{k}{n+k} \right)^{2i} \right] +\underbrace{(1-\frac{k}{n+k})^{2t}V(f_0)}_{=0}\\
&= A\frac{nk}{n+k-1}\times\frac{1-(1-\frac{k}{n+k})^{2t}}{1-(1-\frac{k}{n+k})^2}
%&\leq A\frac{nk}{n+k-1}\times\frac{1}{\frac{k(2n+k)}{(n+k)^2}}{\text{\color{blue}{Ali: If you agree, we could remove this line}} \textbf{ TS: fine for me}}\\
\le A\frac{n(n+k)^2}{(2n+k)(n+k-1)}.
%&\leq A\frac{n(n+k)^2}{(2n+k)(n+k-1)}
\end{align*}
%Now we distinguish between two cases:

 %\item {\bf Case 1.} $k=O(n)$. 
 Since always $0\leq f_t\leq n$, $\frac{(f_t+k)(n-f_t)}{(n+k)^2}\leq 1/4 = A$. 
\[
 \Var{f_t}\leq \frac{1}{4} \cdot \frac{n(n+k)^2}{(2n+k)(n+k-1)}= \frac{1}{4} \cdot \frac{n^3(1+k/n)^2}{n^2(2+k/n)(1+k/n-o(1))}=n\frac{(1+k/n)(1+o(1))}{4(2+k/n)}.
\]
Moreover,
\[
 \Var{f}\geq\frac{n^4(1+k/n)^2}{n^3(2+k/n)^3}.
\]
Using the fact that $1/2\leq \frac{1+x}{2+x}$ for every $x\geq 0$,
\[
\Var{f}\cdot(1+k/n)\cdot(1+o(1))\geq \frac{n(1+k/n)(1+o(1))}{4(2+k/n)}.
\]

%\item {\bf Case 2.} $k = \omega(n)$. Since $n<k$, we may choose  $A=\frac{nk}{(n+k)^2}$ to conclude that
%\begin{align*}
 %\Var{f_t} &\leq\frac{n^2k}{k^2(o(1)+1)}=\frac{n^2(1+o(1))}{k(1+o(1))}, \\
% \Var{f} &=\frac{n^2k^2(1+o(1))^2}{k^3(1+o(1))^2(1+o(1))}=\frac{n^2(1+o(1))}{k(1+o(1))}.
%\end{align*}
%\end{itemize}
By comparing $\Var{f}$ and $\Var{f_t}$, the claim follows.
\end{proof}
We are now ready to apply  Lemma~\ref{lem:low} to derive a lower bound on the total variation distance.
\begin{pro}\label{pro:lower}
 For every constant $c>0$, we have the following lower bounds on the total variation distance for a simple random walk on $K(2n+k,n)$.
\begin{itemize}
 \item If $k=o(n)$, 
\[
 d\left(\frac{1}{2}\log_{1+k/n}(2n+k)-c\frac{n}{k}\right)\geq 1-8(1+o(1))(e-o(1))^{-2c}.
\]
\item If $k=\Theta(n)$,  then
\[
 d\left(\frac{1}{2}\log_{1+k/n}(2n+k)-c\right)\geq 1-8(1+o(1))(1+k/n)^{-2c+4}.
\]
%\item If $k = \omega(n)$, then 
%\[
% d(\frac{1}{2}\log_{1+k/n}(2n+k)-c)\geq 1-8(1+o(1))(1+k/n)^{-2c+2}.
%\]
\end{itemize}
 \end{pro}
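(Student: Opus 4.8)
The plan is to use the distinguishing-statistic bound of Lemma~\ref{lem:low} with the statistic $f=|X\cap[n]|$, comparing the time-$t$ law $\mu=P^t(X_0,\cdot)$ of the walk started at $X_0=\{n+1,\ldots,2n\}$ against $\nu=\pi$. Every quantity needed has already been produced inside the proof of Lemma~\ref{lem:main}: under $\pi$ the statistic is hypergeometric with mean \eqref{eq:unex} and variance \eqref{eq:univar}, the time-$t$ mean $\Ex{f_t}$ is given by \eqref{eq:exp}, and the time-$t$ variance obeys $\Var{f_t}\le C(n,k)\Var{f}$ with $C(n,k)=(1+o(1))(1+k/n)$. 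Since $C(n,k)\ge 1$, I may set $\sigma_*^2=C(n,k)\Var{f}$, so that $\max\{\Var{f_t},\Var{f}\}\le\sigma_*^2$ as Lemma~\ref{lem:low} requires.

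First I would read off the gap between the means from \eqref{eq:unex} and \eqref{eq:exp}. Using $1-\frac{k}{n+k}=(1+k/n)^{-1}$, this is
\[
\left|\Ex{f_t}-\Ex{f}\right|=\frac{n(n+k)}{2n+k}\left(1+\frac{k}{n}\right)^{-(t+1)}.
\]
Dividing by $\sigma_*=\sqrt{C(n,k)}\,\sqrt{\Var{f}}$, the common prefactor $\frac{n(n+k)}{2n+k}$ cancels against $\sqrt{\Var{f}}=\frac{n(n+k)}{(2n+k)\sqrt{2n+k-1}}$, leaving the clean ratio
\[
r:=\frac{\left|\Ex{f_t}-\Ex{f}\right|}{\sigma_*}=\frac{(1+k/n)^{-(t+1)}\sqrt{2n+k-1}}{\sqrt{C(n,k)}}.
\]
Lemma~\ref{lem:low} then gives $d(t)\ge\|\mu-\pi\|_{TV}\ge 1-8/r^2$, so it only remains to evaluate $r$ for the two choices of $t$.

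When $k=o(n)$, take $t=\tfrac12\log_{1+k/n}(2n+k)-c\tfrac{n}{k}$, so that $(1+k/n)^{-t}=(2n+k)^{-1/2}(1+k/n)^{cn/k}$. The key asymptotic is $(1+k/n)^{n/k}=e-o(1)$, which together with $C(n,k)=1+o(1)$ and $(1+k/n)^{-1}=1-o(1)$ yields $r=(1+o(1))(e-o(1))^{c}$, hence $d(t)\ge 1-8(1+o(1))(e-o(1))^{-2c}$. When $k=\Theta(n)$, take $t=\tfrac12\log_{1+k/n}(2n+k)-c$, so $(1+k/n)^{-(t+1)}=(2n+k)^{-1/2}(1+k/n)^{c-1}$ and $\sqrt{C(n,k)}=(1+o(1))\sqrt{1+k/n}$; this gives $r=(1+o(1))(1+k/n)^{c-3/2}$, whence $d(t)\ge 1-8(1+o(1))(1+k/n)^{-2c+3}$. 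Since $1+k/n\ge 1$, this last bound a fortiori implies the stated form with exponent $-2c+4$.

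The main obstacle is the asymptotic bookkeeping, and specifically the behaviour of $(1+k/n)^{n/k}$, which cleanly separates the two regimes: for $k=o(n)$ it converges to $e$ and, over a window of width $n/k$, produces the factor $(e-o(1))^{-2c}$, whereas for $k=\Theta(n)$ no such collapse occurs, the base $1+k/n$ survives, and the window has constant width. The only delicacy is deciding which $(1\pm o(1))$ factors may be absorbed so that the final constants match the statement; the structural part — selecting the statistic $f=|X\cap[n]|$ and feeding Lemmas~\ref{lem:main} and~\ref{lem:low} — is routine once Lemma~\ref{lem:main} is in hand.
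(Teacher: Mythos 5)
Your proposal is correct and follows essentially the same route as the paper: it applies Lemma~\ref{lem:low} to the statistic $f=|X\cap[n]|$, computes the mean gap from (\ref{eq:exp}) and (\ref{eq:unex}), bounds the variances via Lemma~\ref{lem:main}, and plugs in the same two choices of $t$. The only (harmless) difference is that you take $\sigma_*=\sqrt{C(n,k)\Var{f}}$ where the paper uses the looser $\sigma_*=C(n,k)\sqrt{\Var{f}}$, which is why in the $k=\Theta(n)$ case you first get the slightly stronger exponent $-2c+3$ and then correctly relax it to the stated $-2c+4$.
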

\begin{proof}

By using Lemma \ref{lem:main} and (\ref{eq:univar})
 \begin{align*}
\sqrt{\max\{\Var{f},\Var{f_t}\}}\leq \sqrt{C(n,k)\Var{f}}\leq C(n,k)\frac{n(n+k)}{(2n+k)\sqrt{2n+k-1}}=\sigma_*.
\end{align*}
Combining~(\ref{eq:exp}) and (\ref{eq:unex}),
\begin{align*}
 |\Ex{f_t}-\Ex{f}| = \frac{n(n+k)}{2n+k} \left(1-\frac{k}{n+k} \right)^{t+1}
=\frac{1}{C(n,k)} \sigma_*\sqrt{2n+k-1}\left(1+\frac{k}{n} \right)^{-t-1}.
\end{align*}
Define 
\[
 \tilde{g}(t)=\frac{\sqrt{2n+k-1}}{C(n,k)} \left(1+\frac{k}{n} \right)^{-t-1}.
\]
\begin{itemize}

 \item {\bf Case 1.} $k=o(n)$. By Lemma~\ref{lem:main} we know that $C(n,k)=(1+k/n)(1+o(1))=(1+o(1))$. We choose $t=\frac{1}{2}\log_{1+k/n}(2n+k)-c\frac{n}{k}$ so that
\[
 \tilde{g}(t)=\frac{\sqrt{1-o(1)}}{1+o(1)} \left(1+\frac{k}{n} \right)^{c\frac{n}{k}}=(1-o(1))e_n^{c},
\]
where $(e_n)_{n}$ is an increasing sequence tending to $e$ as $n \rightarrow \infty$. Applying Lemma~\ref{lem:low} yields,
\[
 d\left(\frac{1}{2}\log_{1+k/n}(2n+k)-c\frac{n}{k}\right) = \| P^t(X_0, .)-\pi\|_{TV} \geq 1- 8(1+o(1))e_n^{-2c},
\]
where $X_0=\{n+1,\ldots,2n\}$ and the equality comes from the fact that the chain is transitive.

\item {\bf Case 2.} $k=\Theta(n)$. By Lemma~\ref{lem:main}, $C(n,k)=(1+k/n)(1+o(1)) $. Take $t=\frac{1}{2}\log_{1+k/n}(2n+k)-c$. Hence,
\[
 \tilde{g}(t)=\frac{\sqrt{1-o(1)}}{1+o(1)} \, (1+k/n)^{c-2}.
\]
Again, using Lemma~\ref{lem:low} gives
\[
 d\left(\frac{1}{2}\log_{1+k/n}(2n+k)-c\right) = \| P^t(X_0,.)-\pi\|_{TV} \geq 1-8(1+o(1)(1+k/n)^{-2c+4}.
\]

%\item {\bf Case 3.} $k = \omega(n)$. In this case $C(n,k)=1+o(1)$. We set $t=\frac{1}{2}\log_{1+k/n}(2n+k)-c$ so that
%\[
% \tilde{g}(t)=\frac{\sqrt{1-o(1)}}{1+o(1)} \left(1+ \frac{k}{n} \right)^{c-1}.
%\]
%By applying Lemma~\ref{lem:low},
%\[
 % d(\frac{1}{2}\log_{1+k/n}(2n+k)-c) = \| P^t(X_0,.)-\pi\|_{TV} \geq 1-8(1+o(1) \left(1+ \frac{k}{n} \right)^{-2c+2}.
%\]
\end{itemize}
\end{proof}

% \item  {\bf Case 2.} $k=\Theta(n)$: %with $k=\alpha n$:
%\begin{align*}
%\lim_{c\rightarrow\infty}\liminf_{n\rightarrow\infty}d_n(\frac{1}{2}\log_{1+k/n} (2n+k) -c)&=1,\\
%\lim_{c\rightarrow\infty}\limsup_{n\rightarrow\infty}d_n(\frac{1}{2}\log_{1+k/n} (2n+k) +c)&=0.
%\end{align*}
%\item  {\bf Case 3.} $k = \omega(n)$. Then, for every $\epsilon>0$,
%\begin{align*}
%\liminf_{n\rightarrow\infty}d_n(\frac{1}{2}\log_{1+k/n} (2n+k) -(1+\epsilon))&=1,\\
%\limsup_{n\rightarrow\infty}d_n(\frac{1}{2}\log_{1+k/n} (2n+k) +\epsilon)&=0.
%\end{align*}

% \end{itemize}

%\textbf{[T: We have to prepare an answer letter which addresses every issue raised by the reviewers. Could you prepare such a response letter and send it to me? This will also help me to understand what changes have been already done and what changes remain to be done.]}{\color{red} the main issue is that we should provide more information about the cutoff and Kneser graphs and a motivation why we have chosen this problem. Other things are very small and I fixed them.} \textbf{[T: I haved some sentences at the very beginning and also mentioned ''applications''.]}

%\textbf{T: Make sure that ALL journal names are abbreviated correctly:  \newline
%\texttt{http://www.ams.org/msnhtml/serials.pdf}}
%{\color{blue}Ali: I checked the references}


\begin{thebibliography}{99}
\bibitem{Aldous} D. Aldous and P. Diaconis, {\it Shuffling cards and stopping times.}  Amer. Math. Monthly 333-348,  93(1986).
\bibitem{Belsley} E. D. Belsley, {\it Rates of convergence of random walk on distance regular graphs.} Probab. Theory Related Fields, no. 4, 493-533, 112(1998).
\bibitem{CS08} G.-Y. Chen and L. Saloff-Coste, {\it The cutoff phenomenon for ergodic Markov processes.} Electron. J. Probab., no. 3, 26Ð78, 13(2008).
\bibitem{Chen} G.-Y. Chen and L. Saloff-Coste, {\it The cutoff phenomenon for
randomized riffle shuffles}. Random  Structures  Algorithms, no. 3, 346-372, 32(2008).
\bibitem{Diaconis4} P. Diaconis, {\it Group Representations in Probability and Statistics.} IMS, Hayward, CA. 1988.

\bibitem{Diaconis} P. Diaconis and M. Shahshahani, {\it Generating a random permutation with random transposition.} Z.~Wahrsch. Verw. Gebiete, no.~2, 159-179, 57(1981).
\bibitem{Diaconis1}P. Diaconis, R. L. Graham and  J. A. Morrison, {\it  Asymptotic Analysis of a
Random Walk on a Hypercube with Many Dimensions.} Random Structures  Algorithms, no. 1, 51-72, 1(1990).
\bibitem{Diaconis2} P. Diaconis and M. Shahshahani, {\it Time to reach stationarity in the Bernoulli-Laplace diffusion model.} SIAM J. Math. Anal. no. 1, 208-218, 18(1987).
\bibitem{Diaconis3} P. Diaconis, {\it The cutoff phenomenon in finite Markov chains.} Proc. Nat. Acad. Sci. USA,  no. 4, 1659-1664,  93(1996).
\bibitem{Dou} C. Dou and  M. Hildebrand, {\it Enumeration and random random walks on finite groups.}  Ann.  Probab., no. 2, 987-1000, 24(1996).
\bibitem{god1} C. Godsil,  {\it More odd graph theory.} Discrete Math., no. 2, 205-207, 32(1980). 

\bibitem{god} C. Godsil and G. Royle, {\it Algebraic Graph Theory.} Graduate Texts in Mathematics, 207.
Springer-Verlag, New York, 2001.
%\bibitem{HR10} W. H. Haemers and F.  Ramezani, {\it Graphs cospectral with Kneser graphs.} Combinatorics and graphs, 159-164, Contemp. Math., 531, Amer. Math. Soc., Providence, RI, 2010. 

\bibitem{lev} D. A. Levin, Y. Peres and E. L. Wilmer, {\it Markov Chains and Mixing Times.} AMS, Providence, RI, 2009.
\bibitem{lub} E. Lubetzky and A. Sly, {\it Cutoff phenomenon for random walks on random regular graphs.} Duke Math. J., no. 3, 475-510, 153(2010).
\bibitem{lub1} E. Lubetzky and A. Sly, {\it Explicit expanders with cutoff phenomena.} Electron. J.  Probab., no. 15, 419-435, 16(2011).
\bibitem{Pe04}
Y. Peres, {\it Sharp
Thresholds for Mixing Times.} American Institute of Mathematics (AIM) research workshop (Palo Alto, December 2004). Summary available at
http://www.aimath.org/WWN/mixingtimes.
\bibitem{rein} P. Reinfeld, {\it Chromatic polynomials and the spectrum of the Kneser graph.} CDAM Research Report, LSE-CDAM-2000-02.  Available at
http://www.cdam.lse.ac.uk/Reports/reports2000.html.
\bibitem{laurent} L. Saloff-Coste, {\it Total variation lower bounds for finite Markov chains: Wilsons lemma.}  Random walks and geometry,  515-532, Walter de Gruyter GmbH Co, KG, Berlin, (2004).
\bibitem{wilson} D. B. Wilson, {\it Mixing times of Lozenge tiling and card shuffling Markov chains.} Ann. Appl. Probab., no. 1, 274-325, 14(2004).


\end{thebibliography}
\end{document}